\newtheorem{theorem}{Theorem}[section]
\newtheorem{lemma}[theorem]{Lemma}
\newtheorem{proposition}[theorem]{Proposition}
\newtheorem{remark}[theorem]{Remark}
\newtheorem{example}[theorem]{Example}
\title{Conditions forcing the existence of relative complements in lattices and posets}
\author{Ivan~Chajda and Helmut~L\"anger}
\date{}
\begin{document}

\footnotetext{Support of the research by the Austrian Science Fund (FWF), project I~4579-N, and the Czech Science Foundation (GA\v CR), project 20-09869L, entitled ``The many facets of orthomodularity'', as well as by \"OAD, project CZ~02/2019, entitled ``Function algebras and ordered structures related to logic and data fusion'', and, concerning the first author, by IGA, project P\v rF~2021~030, is gratefully acknowledged.}

\maketitle

\begin{abstract}
It is elementary and well-known that if an element $x$ of a bounded modular lattice $\mathbf L$ has a complement in $\mathbf L$ then $x$ has a relative complement in every interval $[a,b]$ containing $x$. We show that the relatively strong assumption of modularity of $\mathbf L$ can be replaced by a weaker one formulated in the language of so-called modular triples. We further show that, in general, we need not suppose that $x$ has a complement in $\mathbf L$. By introducing the concept of modular triples in posets, we extend our results obtained for lattices to posets. It should be remarked that the notion of a complement can be introduced also in posets that are not bounded.
\end{abstract}

{\bf AMS Subject Classification:} 06C20, 06C15, 06A11

{\bf Keywords:} Complement, relative complement, complemented lattice, complemented poset, modular poset, distributive triple, modular triple

\section{Introduction}

Let $\mathbf L=(L,\vee,\wedge,0,1)$ be a bounded lattice and $x\in L$. Then $y\in L$ is called a {\em complement} of $x$ if $x\vee y=1$ and $x\wedge y=0$. The lattice $\mathbf L$ is called {\em complemented} if every element of $L$ has a complement. Of course, $x$ need not have a unique complement. However, if $\mathbf L$ is distributive then every element of $L$ has at most one complement (see e.g.\ \cite B). We can prove a similar result under a bit weaker condition in the following lemma. For this, let us introduce the following concept.

Let $(L,\vee,\wedge)$ be a lattice and $a,b,c\in L$. We call $(a,b,c)$ a {\em distributive} triple if $(a\vee b)\wedge c=(a\wedge c)\vee(b\wedge c)$.

\begin{lemma}\label{lem1}
Let $\mathbf L=(L,\vee,\wedge,0,1)$ be a bounded lattice and $x\in L$. Assume that $y$ and $z$ are complements of $x$ and that $(x,y,z)$ and $(x,z,y)$ are distributive triples. Then $y=z$.
\end{lemma}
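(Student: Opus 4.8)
The plan is to derive the two inequalities $z\le y$ and $y\le z$ separately, each from one of the two distributive-triple hypotheses, and then conclude $y=z$. First I would write out the equation defining the distributive triple $(x,y,z)$, namely $(x\vee y)\wedge z=(x\wedge z)\vee(y\wedge z)$. Since $y$ is a complement of $x$ we have $x\vee y=1$, so the left-hand side simplifies to $1\wedge z=z$; since $z$ is a complement of $x$ we have $x\wedge z=0$, so the right-hand side simplifies to $0\vee(y\wedge z)=y\wedge z$. Hence $z=y\wedge z$, which is just $z\le y$.

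Symmetrically, I would expand the equation for the distributive triple $(x,z,y)$, namely $(x\vee z)\wedge y=(x\wedge y)\vee(z\wedge y)$. Using $x\vee z=1$ the left-hand side becomes $1\wedge y=y$, and using $x\wedge y=0$ the right-hand side becomes $0\vee(z\wedge y)=z\wedge y$, so $y=z\wedge y$, that is, $y\le z$. Combining $z\le y$ and $y\le z$ gives $y=z$.

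There is essentially no obstacle here: the argument is a direct substitution of the complement identities $x\vee y=x\vee z=1$ and $x\wedge y=x\wedge z=0$ into the two triple equations, followed by the trivial simplifications $1\wedge t=t$ and $0\vee t=t$. The only remark worth making is that both distributivity assumptions are genuinely used --- one yields $z\le y$, the other $y\le z$ --- so that the hypothesis cannot be weakened to a single distributive triple without extra conditions.
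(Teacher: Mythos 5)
Your proof is correct and is essentially the paper's own argument: the paper chains the two identities $y=(x\vee z)\wedge y=(x\wedge y)\vee(z\wedge y)=y\wedge z$ and $y\wedge z=(x\wedge z)\vee(y\wedge z)=(x\vee y)\wedge z=z$ into a single string of equalities, while you split them into the two inequalities $y\le z$ and $z\le y$ and invoke antisymmetry. The substance is identical, and your closing remark that both triple hypotheses are genuinely used matches the role each plays in the paper's chain.
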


\begin{proof}
We have
\begin{align*}
y & =1\wedge y=(x\vee z)\wedge y=(x\wedge y)\vee(z\wedge y)=0\vee(z\wedge y)=z\wedge y=y\wedge z=0\vee(y\wedge z)= \\
  & =(x\wedge z)\vee(y\wedge z)=(x\vee y)\wedge z=1\wedge z=z.
\end{align*}
\end{proof}

Let $\mathbf L=(L,\vee,\wedge)$ be a lattice, $a,b\in L$ with $a\leq b$ and $x\in[a,b]$. An element $z$ of $[a,b]$ is called a {\em relative complement} of $x$ in $[a,b]$ if $x\vee z=b$ and $x\wedge z=a$, i.e.\ $z$ is a complement of $x$ in the sublattice $([a,b],\vee,\wedge)$ of $\mathbf L$. Let $R(a,b,x)$ denote the set of all relative complements of $x$ in $[a,b]$. The lattice $\mathbf L$ is called {\em relatively complemented} if for any $a,b\in L$ with $a\leq b$ and each $x\in[a,b]$ we have $R(a,b,x)\neq\emptyset$.

It is well-known that if $\mathbf L=(L,\vee,\wedge,0,1)$ is a bounded modular lattice, $a,b\in L$ with $a\leq b$, $x\in[a,b]$ and $y$ is a complement of $x$ then $e:=(a\vee y)\wedge b=a\vee(y\wedge b)\in R(a,b,x)$. Hence every complemented modular lattice is relatively complemented. However, modularity is only a sufficient condition but not necessary. There exist non-modular complemented lattices where every interval is also complemented, i.e.\ the lattice is relatively complemented. An example of such a non-modular lattice is depicted in Fig.~1.

\vspace*{-2mm}

\begin{center}
\setlength{\unitlength}{7mm}
\begin{picture}(10,8)
\put(5,1){\circle*{.3}}
\put(5,3){\circle*{.3}}
\put(7,3){\circle*{.3}}
\put(1,5){\circle*{.3}}
\put(3,5){\circle*{.3}}
\put(7,5){\circle*{.3}}
\put(9,5){\circle*{.3}}
\put(5,7){\circle*{.3}}
\put(5,1){\line(-1,1)4}
\put(5,1){\line(0,1)2}
\put(5,1){\line(1,1)4}
\put(5,7){\line(-2,-1)4}
\put(5,7){\line(-1,-1)2}
\put(5,7){\line(1,-1)2}
\put(5,7){\line(2,-1)4}
\put(5,3){\line(-1,1)2}
\put(5,3){\line(1,1)2}
\put(7,3){\line(0,1)2}
\put(4.85,.25){$0$}
\put(5.4,2.85){$d$}
\put(7.4,2.85){$a$}
\put(.3,4.85){$b$}
\put(2.3,4.85){$f$}
\put(7.4,4.85){$c$}
\put(9.4,4.85){$e$}
\put(4.85,7.4){$1$}
\put(4.2,-.75){{\rm Fig.~1}}
\end{picture}
\end{center}

\vspace*{4mm}

The aim of this paper is not to characterize relatively complemented lattices and posets but to provide conditions under which an element $x$ of $[a,b]$ has a relative complement in this interval. Of course, $x$ can have more than one relative complement, but not all of them can be obtained by our construction.

As mentioned above, if $\mathbf L$ is a distributive complemented lattice then every element of $L$ has just one complement and $\mathbf L$ is called {\em Boolean}. The converse holds only in a finite lattice, an example of a non-distributive lattice with unique complementation was constructed by R.~P.~Dilworth (\cite D). Lattices with unique complementation were also studied by the first author and R.~Padmanabhan (\cite{CP}).

Let $(L,\vee,\wedge)$ be a lattice and $a,b,c\in L$. Recall that the triple $(a,b,c)$ is called {\em modular} if $a\leq c$ and $(a\vee b)\wedge c=a\vee(b\wedge c)$.

The concept of complementation was transferred to posets by the first author in \cite C.

Let $(P,\leq)$ be a poset $a,b\in P$ and $A,B\subseteq P$. We say $A<B$ if $x\leq y$ for all $x\in A$ and $y\in B$. Instead of $\{a\}<\{b\}$, $\{a\}<B$ and $A<\{b\}$ we simply write $a<b$, $a<B$ and $A<b$, respectively. Analogously we proceed with the relational symbols $\leq$, $>$ and $\geq$. Denote by
\begin{align*}
L(A) & :=\{x\in P\mid x\leq A\}\text{ and} \\
U(A) & :=\{x\in P\mid A\leq x\}
\end{align*}
the so-called {\em lower} and {\em upper cone} of $A$, respectively. Instead of $L(\{a\})$, $L(\{a,b\})$, $L(A\cup\{a\})$, $L(A\cup B)$ and $L\big(U(A)\big)$ we simply write $L(a)$, $L(a,b)$, $L(A,a)$, $L(A,B)$ and $LU(A)$, respectively. Analogously, we proceed in similar cases. The element $b$ is called a {\em complement} of $a$ if $LU(a,b)=UL(a,b)=P$ (see e.g.\ \cite C). Of course, if $(P,\leq,0,1)$ is a bounded poset then $b$ is a complement of $a$ if and only if $U(a,b)=\{1\}$ and $L(a,b)=\{0\}$, i.e.\ $a\vee b=1$ and $a\wedge b=0$.

The concept of a modular poset was introduced by Larmerov\'a and Rach\r unek (\cite{LR}) as follows:

A {\em poset} $\mathbf P=(P,\leq)$ is called {\em modular} if
\[
L\big(U(x,y),z\big)=LU\big(x,L(y,z)\big)
\]
for all $x,y,z\in P$ with $x\leq z$. Complements in posets were investigated in \cite C and \cite{CLP}. If $(P,\leq)$ is a poset, $a,b\in P$ with $a\leq b$ and $x\in[a,b]$ then $y\in[a,b]$ is called a {\em relative complement of $x$ in $[a,b]$} if
\[
U(x,y)=U(b)\text{ and }L(x,y)=L(a).
\]
Of course, this is equivalent to $x\vee y=b$ and $x\wedge y=a$. Again $R(a,b,x)$ denotes the set of all relative complements of $x$ in $[a,b]$. Relative complements in posets were already treated by the first author and Mor\'avkov\'a in \cite{CM}. Recall that a subset $A$ of $P$ is called {\em convex} if $a,b,c\in P$, $a\leq b\leq c$ and $a,c\in A$ together imply $b\in A$. Clearly, the set $R(a,b,x)$ of all relative complements of $x$ in $[a,b]$ is convex.

\section{Relative complements in lattices}

As promised in the introduction, we show that there are conditions weaker than modularity ensuring that an element $x$ of $[a,b]$ has a relative complement. We are going to state two such results. The following result is a special case of Proposition~\ref{prop1} which was proved in \cite{CM}. For the sake of completeness we provide a proof.

\begin{theorem}\label{th1}
Let $\mathbf L=(L,\vee,\wedge)$ be a lattice, $a,b\in L$ with $a\leq b$, $x\in[a,b]$, $y\in L$, $e:=(a\vee y)\wedge b$ and $f:=a\vee(y\wedge b)$. Then $f\leq e$ and the following are equivalent:
\begin{enumerate}[{\rm(i)}]
\item $e,f\in R(a,b,x)$,
\item $(a\vee y)\wedge x=a$ and $x\vee(y\wedge b)=b$.
\end{enumerate}
\end{theorem}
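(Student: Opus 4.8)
The plan is to first dispose of the inequality $f\le e$ by pure lattice bookkeeping: since $a\le a\vee y$ and $a\le b$ we get $a\le(a\vee y)\wedge b=e$, and since $y\wedge b\le a\vee y$ and $y\wedge b\le b$ we get $y\wedge b\le e$; joining these two estimates, $f=a\vee(y\wedge b)\le e$. The same estimates show $a\le e\le b$ and $a\le f\le b$, so both $e$ and $f$ already lie in the interval $[a,b]$, and it only remains to check the four ``complement'' equalities $x\wedge e=a$, $x\vee e=b$, $x\wedge f=a$, $x\vee f=b$ against condition (ii).

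The decisive simplification is that $x$ sits between $a$ and $b$, which lets us absorb $a$ into $x$ from below and $b$ into $x$ from above. Concretely, $x\wedge e=(x\wedge b)\wedge(a\vee y)=x\wedge(a\vee y)=(a\vee y)\wedge x$ because $x\le b$, and $x\vee f=(x\vee a)\vee(y\wedge b)=x\vee(y\wedge b)$ because $a\le x$. Thus $x\wedge e=a$ is literally the first half of (ii), and $x\vee f=b$ is literally the second half of (ii). This immediately handles the implication (i)$\Rightarrow$(ii): from $e\in R(a,b,x)$ alone one reads off $(a\vee y)\wedge x=a$, and from $f\in R(a,b,x)$ alone one reads off $x\vee(y\wedge b)=b$.

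For the converse (ii)$\Rightarrow$(i), the two identities just recorded give $x\wedge e=a$ and $x\vee f=b$ directly. The remaining two equalities come for free from $f\le e$ together with $x\in[a,b]$: since $f\le e$ we have $b=x\vee f\le x\vee e\le b$, forcing $x\vee e=b$; and $a\le x\wedge f\le x\wedge e=a$, forcing $x\wedge f=a$. Together with $e,f\in[a,b]$ this yields $e,f\in R(a,b,x)$.

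There is no real obstacle here — the argument is entirely routine manipulation of $\vee,\wedge,\le$. The only point needing a small idea is the last step: rather than computing $x\vee e$ and $x\wedge f$ head-on (which would seem to call for some form of modularity), one uses the already-established sandwich $f\le e$ to bootstrap the two ``missing'' equalities from the two ``easy'' ones. I would present the proof in exactly this order: (a) prove $f\le e$ and $e,f\in[a,b]$; (b) record the identities $x\wedge e=(a\vee y)\wedge x$ and $x\vee f=x\vee(y\wedge b)$; (c) deduce (i)$\Leftrightarrow$(ii) as above.
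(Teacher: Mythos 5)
Your proof is correct and follows essentially the same route as the paper's: establish $f\le e$ and $a\le f\le e\le b$, reduce $x\wedge e$ to $(a\vee y)\wedge x$ and $x\vee f$ to $x\vee(y\wedge b)$ using $a\le x\le b$, and then bootstrap $x\wedge f=a$ and $x\vee e=b$ from the sandwich $f\le e$. No gaps.
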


\begin{proof}
Because of $a\leq a\vee y$, $a\leq b$, $y\wedge b\leq a\vee y$ and $y\wedge b\leq b$ we have $f\leq e$. Moreover, we have
\begin{align*}
        a & \leq f\leq e\leq b, \\
e\wedge x & =\big((a\vee y)\wedge b\big)\wedge x=(a\vee y)\wedge x, \\
  x\vee f & =x\vee\big(a\vee(y\wedge b)\big)=x\vee(y\wedge b).
\end{align*}
Since $a\leq x$ and $a\leq f\leq e$, we have that $x\wedge e=a$ implies $x\wedge f=a$, and since $x\leq b$ and $f\leq e\leq b$, we have that $x\vee f=b$ implies $x\vee e=b$. Hence (i) and (ii) are both equivalent to $e\wedge x=a$ and $x\vee f=b$.
\end{proof}

It is worth noticing that we do not assume $y$ to be a complement of $x$.

\begin{example}
Consider the lattice $\mathbf L$ depicted in Fig.~2:

\vspace*{-2mm}

\begin{center}
\setlength{\unitlength}{7mm}
\begin{picture}(12,14)
\put(6,1){\circle*{.3}}
\put(4,3){\circle*{.3}}
\put(8,3){\circle*{.3}}
\put(6,5){\circle*{.3}}
\put(1,6){\circle*{.3}}
\put(11,6){\circle*{.3}}
\put(6,7){\circle*{.3}}
\put(4,9){\circle*{.3}}
\put(8,9){\circle*{.3}}
\put(6,11){\circle*{.3}}
\put(6,13){\circle*{.3}}
\put(6,1){\line(-1,1)5}
\put(6,1){\line(1,1)5}
\put(6,5){\line(-1,-1)2}
\put(6,5){\line(0,1)2}
\put(6,5){\line(1,-1)2}
\put(6,7){\line(-1,1)2}
\put(6,7){\line(1,1)2}
\put(6,11){\line(-1,-1)5}
\put(6,11){\line(0,1)2}
\put(6,11){\line(1,-1)5}
\put(5.85,.25){$0$}
\put(3.3,2.85){$a$}
\put(8.4,2.85){$c$}
\put(6.4,4.85){$f$}
\put(.3,5.85){$x$}
\put(11.4,5.85){$y$}
\put(6.4,6.85){$e$}
\put(3.3,8.85){$b$}
\put(8.4,8.85){$d$}
\put(6.4,10.85){$g$}
\put(5.85,13.4){$1$}
\put(5.2,-.75){{\rm Fig.~2}}
\end{picture}
\end{center}

\vspace*{4mm}

Evidently, $\mathbf L$ is neither modular nor complemented. Further, $x\in[a,b]$, and for $y\in L$ we have
\[
(a\vee y)\wedge x=a\text{ and }x\vee(y\wedge b)=b.
\]
Hence, the assumptions of Theorem~\ref{th1} are satisfied. Put $e:=(a\vee y)\wedge b$ and $f:=a\vee(y\wedge b)$. Then $e,f\in R(a,b,x)$. It is worth noticing that neither $y$ belongs to $[a,b]$ nor it is a complement of $x$.
\end{example}

If $y$ is, moreover, a complement of $x$ then we can derive relative complements of $x$ in $[a,b]$ by using modular triples.

\begin{theorem}\label{th4}
Let $\mathbf L=(L,\vee,\wedge,0,1)$ be a bounded lattice, $a,b\in L$ with $a\leq b$, $x\in[a,b]$, $y$ a complement of $x$, $e:=(a\vee y)\wedge b$ and $f:=a\vee(y\wedge b)$. Then $f\leq e$ and the following are equivalent:
\begin{enumerate}[{\rm(i)}]
\item $e,f\in R(a,b,x)$,
\item $(a,y,x)$ and $(x,y,b)$ are modular triples.
\end{enumerate}
\end{theorem}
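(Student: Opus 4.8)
The plan is to piggy-back entirely on Theorem~\ref{th1}. Applying that theorem with the very same $a,b,x,y,e,f$ we already get $f\leq e$ for free, and we learn that statement (i) is equivalent to the pair of equalities
\[
(a\vee y)\wedge x=a\quad\text{and}\quad x\vee(y\wedge b)=b.
\]
So the whole task reduces to showing that, under the standing hypothesis that $y$ is a complement of $x$ (i.e.\ $x\vee y=1$ and $x\wedge y=0$), these two equalities together are precisely the assertion that $(a,y,x)$ and $(x,y,b)$ are modular triples.

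For the first triple: since $x\in[a,b]$ we have $a\leq x$, so by definition $(a,y,x)$ is a modular triple if and only if $(a\vee y)\wedge x=a\vee(y\wedge x)$. Because $y\wedge x=0$, the right-hand side equals $a\vee 0=a$, so $(a,y,x)$ is modular if and only if $(a\vee y)\wedge x=a$. For the second triple: again $x\leq b$, so $(x,y,b)$ is a modular triple if and only if $(x\vee y)\wedge b=x\vee(y\wedge b)$; since $x\vee y=1$, the left-hand side is $1\wedge b=b$, so $(x,y,b)$ is modular if and only if $x\vee(y\wedge b)=b$.

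Combining the two bullet observations, statement (ii) is equivalent to the conjunction ``$(a\vee y)\wedge x=a$ and $x\vee(y\wedge b)=b$'', which by Theorem~\ref{th1} is equivalent to (i); together with the inequality $f\leq e$ supplied by that theorem, this completes the argument. There is no serious obstacle here: the proof is just a matter of unfolding the definition of a modular triple and exploiting $x\wedge y=0$ and $x\vee y=1$ to simplify the two sides. The only thing to be careful about is to record the order relations $a\leq x$ and $x\leq b$ (immediate from $x\in[a,b]$), since these are part of the definition of a modular triple and are what make the triples $(a,y,x)$ and $(x,y,b)$ eligible in the first place.
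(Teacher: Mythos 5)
Your proof is correct and follows essentially the same route as the paper's: invoke Theorem~\ref{th1} to reduce (i) to the pair of equalities $(a\vee y)\wedge x=a$ and $x\vee(y\wedge b)=b$, then use $x\wedge y=0$ and $x\vee y=1$ to rewrite these as the two modular-triple identities. The paper obtains $f\leq e$ by repeating the short order-theoretic argument rather than citing Theorem~\ref{th1} for it, but that is an immaterial difference.
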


\begin{proof}
Since $a\leq a\vee y$, $a\leq b$, $y\wedge b\leq a\vee y$ and $y\wedge b\leq b$, we have $f\leq e$. Because of Theorem~\ref{th1}, (i) is equivalent to
\begin{equation}\label{equ1}
(a\vee y)\wedge x=a\text{ and }b=x\vee(y\wedge b).
\end{equation}
Since
\begin{align*}
a & =a\vee0=a\vee(y\wedge x), \\
b & =1\wedge b=(x\vee y)\wedge b,
\end{align*}
(\ref{equ1}) is equivalent to
\begin{equation}\label{equ2}
(a\vee y)\wedge x=a\vee(y\wedge x)\text{ and }(x\vee y)\wedge b=x\vee(y\wedge b).
\end{equation}
Finally, due to the definition of modular triples, (\ref{equ2}) is equivalent to (ii).
\end{proof}

An example of the situation described by Theorem~\ref{th4} is the lattice depicted in Fig.~3 where $y$ is a complement of $x$ and the elements $e$ and $f$ satisfy the assumptions of Theorem~\ref{th4}. Let us note that $f<e$ in this case.

\begin{example}
Consider the lattice $\mathbf L$ depicted in Fig.~3:

\vspace*{-2mm}

\begin{center}
\setlength{\unitlength}{7mm}
\begin{picture}(12,12)
\put(6,1){\circle*{.3}}
\put(4,3){\circle*{.3}}
\put(8,3){\circle*{.3}}
\put(6,5){\circle*{.3}}
\put(1,6){\circle*{.3}}
\put(11,6){\circle*{.3}}
\put(6,7){\circle*{.3}}
\put(4,9){\circle*{.3}}
\put(8,9){\circle*{.3}}
\put(6,11){\circle*{.3}}
\put(6,1){\line(-1,1)5}
\put(6,1){\line(1,1)5}
\put(6,5){\line(-1,-1)2}
\put(6,5){\line(0,1)2}
\put(6,5){\line(1,-1)2}
\put(6,7){\line(-1,1)2}
\put(6,7){\line(1,1)2}
\put(6,11){\line(-1,-1)5}
\put(6,11){\line(1,-1)5}
\put(5.85,.25){$0$}
\put(3.3,2.85){$a$}
\put(8.4,2.85){$c$}
\put(6.4,4.85){$f$}
\put(.3,5.85){$x$}
\put(11.4,5.85){$y$}
\put(6.4,6.85){$e$}
\put(3.3,8.85){$b$}
\put(8.4,8.85){$d$}
\put(5.85,11.4){$1$}
\put(5.2,-.75){{\rm Fig.~3}}
\end{picture}
\end{center}

\vspace*{4mm}

The lattice $\mathbf L$ is not modular, $y$ is a complement of $x$, the triples $(a,y,x)$ and $(x,y,b)$ are modular since
\begin{align*}
(a\vee y)\wedge x & =d\wedge x=a\text{ and }a\vee(y\wedge x)=a\vee0=a, \\
x\vee(y\wedge b) & =x\vee c=b\text{ and }(x\vee y)\wedge b=1\wedge b=b
\end{align*}
and $e=(a\vee y)\wedge b$ and $f=a\vee(y\wedge b)$ are relative complements of $x$ in $[a,b]$ in accordance with Theorem~\ref{th4}.
\end{example}

\begin{example}
Unfortunately, the methods presented in Theorems~\ref{th1} and \ref{th4} do not produce all relative complements of $x$ in $[a,b]$, even if the lattice $\mathbf L$ is modular. Consider the lattice $\mathbf L$ visualized in Fig.~4:

\vspace*{-2mm}

\begin{center}
\setlength{\unitlength}{7mm}
\begin{picture}(10,10)
\put(5,1){\circle*{.3}}
\put(3,3){\circle*{.3}}
\put(3,5){\circle*{.3}}
\put(7,3){\circle*{.3}}
\put(1,5){\circle*{.3}}
\put(5,5){\circle*{.3}}
\put(9,5){\circle*{.3}}
\put(3,7){\circle*{.3}}
\put(7,7){\circle*{.3}}
\put(5,9){\circle*{.3}}
\put(5,1){\line(-1,1)4}
\put(5,1){\line(1,1)4}
\put(5,9){\line(-1,-1)4}
\put(5,9){\line(1,-1)4}
\put(3,3){\line(0,1)4}
\put(3,3){\line(1,1)4}
\put(7,3){\line(-1,1)4}
\put(4.85,.25){$0$}
\put(2.3,2.85){$a$}
\put(7.4,2.85){$c$}
\put(.3,4.85){$x$}
\put(2.2,4.85){$z_1$}
\put(5.4,4.85){$z_2$}
\put(9.4,4.85){$y$}
\put(2.3,6.85){$b$}
\put(7.4,6.85){$d$}
\put(4.85,9.4){$1$}
\put(4.2,-.75){{\rm Fig.~4}}
\end{picture}
\end{center}

\vspace*{4mm}

Then $(a\vee y)\wedge x=d\wedge x=a$ and $x\vee(y\wedge b)=x\vee c=b$. Thus the assumptions of Theorem~\ref{th1} are satisfied. Hence, $(a\vee y)\wedge b=d\wedge b=z_2$ and $a\vee(y\wedge b)=a\vee c=z_2$ belong to $R(a,b,x)$. However, $x$ has in $[a,b]$ also another relative complement $z_1$ which is not obtained in this way. The reason is that $z_1$ is both join- and meet-irreducible and hence it cannot be a result of any term function composed by means of join and meet.
\end{example}

\begin{example}
On the other hand, $R(a,b,x)$ is a convex set, thus if we can compute elements $e$ and $f$ as shown by Theorem~\ref{th1} then also every element $z$ in the interval $[f,e]$ is a relative complement of $x$, see the lattice depicted in Fig.~5:

\vspace*{-2mm}

\begin{center}
\setlength{\unitlength}{7mm}
\begin{picture}(14,16)
\put(7,1){\circle*{.3}}
\put(5,3){\circle*{.3}}
\put(9,3){\circle*{.3}}
\put(7,5){\circle*{.3}}
\put(1,7){\circle*{.3}}
\put(7,7){\circle*{.3}}
\put(13,7){\circle*{.3}}
\put(7,9){\circle*{.3}}
\put(5,11){\circle*{.3}}
\put(9,11){\circle*{.3}}
\put(7,13){\circle*{.3}}
\put(7,15){\circle*{.3}}
\put(7,1){\line(-1,1)6}
\put(7,1){\line(1,1)6}
\put(7,13){\line(-1,-1)6}
\put(7,13){\line(0,1)2}
\put(7,13){\line(1,-1)6}
\put(7,5){\line(-1,-1)2}
\put(7,5){\line(0,1)4}
\put(7,5){\line(1,-1)2}
\put(7,9){\line(-1,1)2}
\put(7,9){\line(1,1)2}
\put(6.85,.25){$0$}
\put(4.3,2.85){$a$}
\put(9.4,2.85){$c$}
\put(7.4,4.85){$f$}
\put(.3,6.85){$x$}
\put(7.4,6.85){$z$}
\put(13.4,6.85){$y$}
\put(7.4,8.85){$e$}
\put(4.3,10.85){$b$}
\put(9.4,10.85){$d$}
\put(7.4,12.85){$e$}
\put(6.85,15.4){$1$}
\put(6.2,-.75){{\rm Fig.~5}}
\end{picture}
\end{center}

\vspace*{4mm}

The assumptions of Theorem~\ref{th1} are satisfied, i.e.\ $(a\vee y)\wedge x=d\wedge x=a$ and $x\vee(y\wedge b)=x\vee c=b$. Thus $e,f\in R(a,b,x)$. Here $f<z<e$, thus $z\in R(a,b,x)$.
\end{example}

If $y$ is a complement of $x$ and the triple $(a,y,b)$ is modular, we can determine a relative complement of $x$ in $[a,b]$ by using further modular triples. In this case we have $e=f$, i.e.\ we compute only one relative complement of $x$ in $[a,b]$.

\begin{theorem}\label{th3}
Let $\mathbf L=(L,\vee,\wedge,0,1)$ be a bounded lattice, $a,b\in L$ with $a\leq b$, $x\in[a,b]$ and $y$ a complement of $x$, assume $(a,y,b)$ to be a modular triple and put $e:=(a\vee y)\wedge b=a\vee(y\wedge b)$. Then the following are equivalent:
\begin{enumerate}[{\rm(i)}]
\item $e\in R(a,b,x)$,
\item $(a,y\wedge b,x)$ and $(x,a\vee y,b)$ are modular triples.
\end{enumerate}
\end{theorem}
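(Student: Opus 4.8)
The plan is to reduce everything to Theorem~\ref{th1}, just as in the proof of Theorem~\ref{th4}, and then to translate the two join/meet equations it produces into the language of modular triples, using that $y$ is a complement of $x$.

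First I would note that $a\le e\le b$ holds automatically, so membership of $e$ in $[a,b]$ is no issue, and that the hypothesis ``$(a,y,b)$ is a modular triple'' is precisely what makes both representations $e=(a\vee y)\wedge b$ and $e=a\vee(y\wedge b)$ available — both will be needed. Applying Theorem~\ref{th1} to the present element $y$, and observing that here the elements called $e$ and $f$ there coincide, yields that (i) is equivalent to the conjunction $(a\vee y)\wedge x=a$ and $x\vee(y\wedge b)=b$. So it remains to prove that the first of these equalities is equivalent to $(a,y\wedge b,x)$ being a modular triple and the second to $(x,a\vee y,b)$ being a modular triple.

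For the first equivalence I would write out the defining identity of the triple $(a,y\wedge b,x)$, namely $\big(a\vee(y\wedge b)\big)\wedge x=a\vee\big((y\wedge b)\wedge x\big)$ (the required order relation $a\le x$ holds). Its right-hand side collapses to $a$ because $(y\wedge b)\wedge x=(x\wedge y)\wedge b=0$; on its left-hand side I replace $a\vee(y\wedge b)$ by $e=(a\vee y)\wedge b$ and then use $x\le b$ to obtain $e\wedge x=(a\vee y)\wedge x$. Hence the identity says exactly $(a\vee y)\wedge x=a$. For the second equivalence, the defining identity of $(x,a\vee y,b)$ is $\big(x\vee(a\vee y)\big)\wedge b=x\vee\big((a\vee y)\wedge b\big)$ (and $x\le b$ holds). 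Since $a\le x$, its left-hand side equals $(x\vee y)\wedge b=1\wedge b=b$; on its right-hand side I replace $(a\vee y)\wedge b$ by $e=a\vee(y\wedge b)$ and use $a\le x$ again to get $x\vee e=x\vee(y\wedge b)$. Hence this identity says exactly $x\vee(y\wedge b)=b$. Combining the two equivalences with the reduction via Theorem~\ref{th1} gives (i)$\Leftrightarrow$(ii).

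I do not expect any genuine obstacle here: the proof is a computational verification following the same pattern as the proof of Theorem~\ref{th4}. The only point requiring a little care is to invoke the right representation of $e$ in the right place — $(a\vee y)\wedge b$ when handling the triple $(a,y\wedge b,x)$, and $a\vee(y\wedge b)$ when handling $(x,a\vee y,b)$ — and to use each of $x\wedge y=0$, $x\vee y=1$, $a\le x$ and $x\le b$ exactly where it is needed.
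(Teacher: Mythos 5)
Your proposal is correct and follows essentially the same route as the paper: reduce (i) via Theorem~\ref{th1} to the pair of equations $(a\vee y)\wedge x=a$ and $x\vee(y\wedge b)=b$, then use $x\wedge y=0$, $x\vee y=1$, $a\le x$, $x\le b$ and the two representations of $e$ supplied by modularity of $(a,y,b)$ to identify these equations with the modular-triple identities for $(a,y\wedge b,x)$ and $(x,a\vee y,b)$. The only (immaterial) difference is organizational: the paper rewrites the constants $a$ and $b$ inside the equations to arrive at the triple identities, whereas you expand each triple identity and collapse it to the corresponding equation.
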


\begin{proof}
Because of Theorem~\ref{th1}, (i) is equivalent to
\begin{equation}\label{equ3}
\big(a\vee(y\wedge b)\big)\wedge x=a\text{ and }b=x\vee\big((a\vee y)\wedge b\big).
\end{equation}
Since
\begin{align*}
a & =a\vee0=a\vee(y\wedge x)=a\vee\big((y\wedge b)\wedge x\big), \\
b & =1\wedge b=(x\vee y)\wedge b=\big(x\vee(a\vee y)\big)\wedge b,
\end{align*}
(\ref{equ3}) is equivalent to
\begin{equation}\label{equ4}
\big(a\vee(y\wedge b)\big)\wedge x=a\vee\big((y\wedge b)\wedge x\big)\text{ and }\big(x\vee(a\vee y)\big)\wedge b=x\vee\big((a\vee y)\wedge b\big).
\end{equation}
Finally, due to the definition of modular triples, (\ref{equ4}) is equivalent to (ii).
\end{proof}

\begin{remark}
Consider the following well-known result mentioned in the introduction: If $\mathbf L=(L,\vee,\wedge,0,1)$ is a bounded modular lattice, $a,b\in L$ with $a\leq b$, $x\in[a,b]$ and $y$ a complement of $x$ then $e:=(a\vee y)\wedge b=a\vee(y\wedge b)\in R(a,b,x)$. In the proof of this result only modularity of the triples $(a,y,b)$, $(a,y,x)$ and $(x,y,b)$ is used. According to Theorem~\ref{th3}, modularity of these three triples must imply modularity of the triples $(a,y\wedge b,x)$ and $(x,a\vee y,b)$ which can be easily shown:
\begin{align*}
\big(a\vee(y\wedge b)\big)\wedge x & =\big((a\vee y)\wedge b\big)\wedge x=(a\vee y)\wedge x=a\vee(y\wedge x)=a\vee\big((y\wedge b)\wedge x\big), \\
  \big(x\vee(a\vee y)\big)\wedge b & =(x\vee y)\wedge b=x\vee(y\wedge b)=x\vee\big(a\vee(y\wedge b)\big)=x\vee\big((a\vee y)\wedge b\big).
\end{align*}
\end{remark}

\section{Relative complements in posets}

Now we turn our attention to complements and relative complements in posets. For this purpose, we must define again distributive and modular triples.

Let $\mathbf P=(P,\leq)$ be a poset and $a,b,c\in P$. Then the poset $\mathbf P$ is called {\em distributive} if
\[
L\big(U(x,y),z\big)=LU\big(L(x,z),L(y,z)\big)
\]
for all $x,y,z\in P$. We call $(a,b,c)$ a {\em distributive} triple of $\mathbf P$ if
\[
L\big(U(a,b),c\big)=LU\big(L(a,c),L(b,c)\big)
\]
and we call $(a,b,c)$ a {\em modular triple} of $\mathbf P$ if $a\leq c$ and
\[
L\big(U(a,b),c\big)=LU\big(a,L(b,c)\big).
\]
The following result is a straightforward generalization of the corresponding result for lattices.

\begin{proposition}
{\rm(\cite C)} Let $\mathbf P=(P,\leq)$ be a distributive poset and $x\in P$. Then $x$ has at most one complement.
\end{proposition}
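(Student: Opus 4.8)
The plan is to mimic the lattice proof of Lemma~\ref{lem1}, replacing each join/meet step by the corresponding manipulation of lower and upper cones, and exploiting the distributivity identity twice (once for each ordering of the two complements). So suppose $y$ and $z$ are both complements of $x$ in the distributive poset $\mathbf P$; thus $LU(x,y)=UL(x,y)=LU(x,z)=UL(x,z)=P$. I want to conclude $y=z$, and since a poset is determined by its order it suffices to show $L(y)=L(z)$ (equivalently $y\leq z$ and $z\leq y$), or to show directly that $y$ and $z$ have the same lower cone via cone-theoretic identities.

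First I would record the elementary cone identities I will need: $L(P)$ makes no sense, but $U(x,z)=U(x,z)$, and crucially $L\big(U(x,z)\big)=P$ means that intersecting with $U(x,z)$ inside an $L(\cdots)$ expression is vacuous, while $L(x,z)$ being ``$L(0)$-like'' (in the bounded case $=L(0)$, in general $=UL(x,z)$'s lower counterpart) can be absorbed in $LU$ of a pair. The key algebraic step, matching $y=(x\vee z)\wedge y$, is the claim $L(y)=L\big(U(x,z),y\big)$, which holds because $LU(x,z)=P$ so $U(x,z)\subseteq U(y)$ fails in general — here I must be careful. The honest route is: from $LU(x,z)=P$ we get $L\big(U(x,z),y\big)=L\big(P,y\big)=L(y)$? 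No — $L(U(x,z),y)=L(U(x,z))\cap L(y)=P\cap L(y)=L(y)$. Good, that does work since $L(U(x,z))=LU(x,z)=P$. Now apply the distributive-triple identity for $(x,z,y)$: $L\big(U(x,z),y\big)=LU\big(L(x,y),L(z,y)\big)$. Since $z$ is a complement of $x$ we have $L(x,z)=$ the bottom-cone, but we want $L(x,y)$: since $y$ is a complement of $x$, $L(x,y)$ is the lower cone of the ``zero'', i.e. $UL(x,y)=P$ gives $L(x,y)=LUL(x,y)$... the clean statement is $L(x,y)\subseteq L(z,y)$, so $LU\big(L(x,y),L(z,y)\big)=LUL(z,y)=L(z,y)$ because $L$ and $LUL$ coincide on any set. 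Hence $L(y)=L(z,y)=L(z)\cap L(y)$, i.e. $L(y)\subseteq L(z)$. By the symmetric argument (using distributivity of the triple $(x,y,z)$ and $LU(x,y)=P$) we get $L(z)\subseteq L(y)$, whence $L(y)=L(z)$ and therefore $y=z$.

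The main obstacle, and the step to execute with care, is the reduction $L(x,y)\subseteq L(z,y)$ — equivalently explaining why the ``$L(0)$-factor'' $L(x,y)$ is absorbed. In a bounded poset $L(x,y)=L(0)$ and $L(0)\subseteq L(z,y)$ is immediate, so the argument is transparent there; in the general (unbounded) setting one must instead argue that since $UL(x,y)=P$, every element of $L(x,y)$ lies below every element of $U(z,y)$ as well — indeed $L(x,y)\leq UL(x,y)=P\supseteq U(z,y)$ — wait, that only gives $L(x,y)\leq U(z,y)$, i.e. $L(x,y)\subseteq LU(z,y)$, and combined with $L(x,y)\subseteq L(y)$ one gets $L(x,y)\subseteq L(y)\cap LU(z,y)$; then $LU$ of a subset of $L(z,y)\cup$ (something in $LU(z,y)$) collapses appropriately. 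I would therefore phrase the whole computation at the level of the string identity $L(y)=L\big(U(x,z),y\big)=LU\big(L(x,y),L(z,y)\big)=LU\big(L(z,y)\big)=L(z,y)$, justifying the penultimate equality by noting $L(x,y)\subseteq LU(z,y)$ (from $UL(x,y)=P$) together with the general fact $LU(A,B)=LU(B)$ whenever $A\subseteq LU(B)$. This fact, plus its dual, are the only lemmas needed, and they are routine consequences of the Galois-connection properties of $L$ and $U$; after that the proof is the two-line symmetric chain above, exactly parallel to Lemma~\ref{lem1}.
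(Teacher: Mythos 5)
Your argument is correct and is essentially the paper's own: the paper states this proposition without proof (citing the reference), but the immediately following lemma --- which weakens the hypothesis to distributivity of just the two triples $(x,y,z)$ and $(x,z,y)$ --- is proved by exactly the chain of cone identities you describe, using $LU(x,z)=P$ to absorb $U(x,z)$ and $UL(x,y)=P$ to absorb the factor $L(x,y)$ inside $LU\big(L(x,y),L(z,y)\big)$. The only cosmetic differences are that the paper runs a single chain from $L(y)$ through $L(y,z)$ to $L(z)$ instead of invoking symmetry, and that your absorption step should be stated as $L(x,y)\subseteq L(z,y)=LUL(z,y)$ (which $UL(x,y)=P$ does give) rather than $L(x,y)\subseteq LU(z,y)$.
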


Analogously as in Lemma~\ref{lem1}, distributivity of $(P,\leq)$ can be replaced by a weaker condition. The following lemma extends Lemma~\ref{lem1} to posets.

\begin{lemma}
Let $\mathbf P=(P,\leq)$ be a poset and $x\in L$. Assume that $y$ and $z$ are complements of $x$ and that $(x,y,z)$ and $(x,z,y)$ are distributive triples. Then $y=z$.
\end{lemma}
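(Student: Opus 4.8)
The plan is to imitate the proof of Lemma~\ref{lem1}, replacing the binary operations $\vee$ and $\wedge$ by the cone operators $U$ and $L$. Since a principal lower cone determines its generator, it suffices to prove $L(y)=L(z)$, and I will in fact derive $y\le z$ and $z\le y$ separately by a symmetric argument, the first half of which runs as follows.

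First I would use that $z$ is a complement of $x$, so $LU(x,z)=P$, and compute, with the elementary identity $L(A\cup\{y\})=L(A)\cap L(y)$, that $L\big(U(x,z),y\big)=L\big(U(x,z)\big)\cap L(y)=LU(x,z)\cap L(y)=L(y)$. Because $(x,z,y)$ is a distributive triple, the left-hand side also equals $LU\big(L(x,y),L(z,y)\big)$, so $L(y)=LU\big(L(x,y),L(z,y)\big)$. Next I would use that $y$ is a complement of $x$, so $UL(x,y)=P$; in particular every element of $P$, hence $z$, is an upper bound of $L(x,y)$, i.e.\ $L(x,y)\subseteq L(z)$, whence $L(x,y)\subseteq L(y)\cap L(z)=L(z,y)$. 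Consequently $L(x,y)\cup L(z,y)=L(z,y)$, and combining this with the standard identity $LUL(A)=L(A)$ yields $L(y)=LU\big(L(x,y),L(z,y)\big)=LUL(z,y)=L(z,y)\subseteq L(z)$, so $y\le z$.

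The second half is obtained by interchanging the roles of $y$ and $z$: now $y$ being a complement of $x$ gives $L\big(U(x,y),z\big)=LU(x,y)\cap L(z)=L(z)$, distributivity of $(x,y,z)$ rewrites this as $L(z)=LU\big(L(x,z),L(y,z)\big)$, and $z$ being a complement of $x$ gives $UL(x,z)=P$, hence $L(x,z)\subseteq L(y)$ and therefore $L(x,z)\subseteq L(y,z)$; exactly as before this collapses to $L(z)=LUL(y,z)=L(y,z)\subseteq L(y)$, so $z\le y$. Antisymmetry then gives $y=z$.

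I do not expect a serious obstacle. The only thing requiring care is the bookkeeping of which complement condition ($LU$ or $UL$, applied to the pair $x,y$ or to the pair $x,z$) is being invoked at each step, since the two conditions packaged in the poset definition of \emph{complement} here play exactly the roles that $x\vee y=1$ and $x\wedge y=0$ play in the lattice proof. One should also recall at the outset the harmless cone identities $L(A\cup\{y\})=L(A)\cap L(y)$ and $LUL=L$, which keep the computation valid even when $\mathbf P$ is unbounded and some of the cones involved happen to be empty.
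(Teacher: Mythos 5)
Your proposal is correct and follows essentially the same route as the paper: the paper's proof is a single chain of equalities $L(y)=L\big(U(x,z),y\big)=LU\big(L(x,y),L(z,y)\big)=L(z,y)=L(y,z)=LU\big(L(x,z),L(y,z)\big)=L\big(U(x,y),z\big)=L(z)$, using the two distributive triples and the complement conditions exactly where you do, with your two symmetric halves corresponding to the two halves of that chain glued at $L(z,y)=L(y,z)$. The only cosmetic difference is that where you collapse the union via $L(x,y)\subseteq L(z,y)$, the paper writes $LU\big(L(x,y),L(z,y)\big)=L\big(UL(x,y)\cap UL(z,y)\big)=L\big(P\cap UL(z,y)\big)$; both are the same use of $UL(x,y)=P$.
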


\begin{proof}
We have
\begin{align*}
L(y) & =P\cap L(y)=LU(x,z)\cap L(y)=L\big(U(x,z),y\big)=LU\big(L(x,y),L(z,y)\big)= \\
     & =L\big(UL(x,y)\cap UL(z,y)\big)=L\big(P\cap UL(z,y)\big)=LUL(z,y)=L(z,y)=L(y,z)= \\
     & =LUL(y,z)=L(P\cap UL(y,z)\big)=L\big(UL(x,z)\cap UL(y,z)\big)= \\
     & =LU\big(L(x,z),L(y,z)\big)=L\big(U(x,y),z\big)=LU(x,y)\cap L(z)=P\cap L(z)=L(z)
\end{align*}
and hence $y=z$.
\end{proof}
 
Relations between complements and relative complements in posets were investigated in \cite{CM}.

Again, modularity of $(P,\leq)$ can be replaced by some weaker conditions, see the following result.

\begin{proposition}\label{prop1}
{\rm(\cite{CM})} Let $\mathbf P=(P,\leq)$ be a poset, $a,b\in P$ with $a\leq b$, $x\in[a,b]$ and $y\in P$ and assume that $e$ is the greatest element of $L\big(U(a,y),b\big)$ and $f$ the smallest element of $U\big(a,L(y,b)\big)$. If
\begin{align*}
L\big(U(a,y),x\big) & =L(a), \\
U\big(x,L(y,b)\big) & =U(b)
\end{align*}
then $e,f\in R(a,b,x)$.
\end{proposition}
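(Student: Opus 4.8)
The plan is to transcribe the proof of Theorem~\ref{th1} into the language of upper and lower cones. The one genuinely new ingredient is the elementary observation that a lower cone $L(S)$ is always a down-set, so if it has a greatest element $m$ then $L(S)=L(m)$; dually, an upper cone $U(T)$ with a smallest element $n$ equals $U(n)$. Applied to our hypotheses this yields
\[
L\big(U(a,y),b\big)=L(e)\qquad\text{and}\qquad U\big(a,L(y,b)\big)=U(f),
\]
and from these two identities the statement should follow by routine manipulation of cones together with the relations $a\le x\le b$.

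First I would locate $e$ and $f$. Since $a\le u$ for every $u\in U(a,y)$ and $a\le b$, we have $a\in L\big(U(a,y),b\big)$, hence $a\le e$; and $e\le b$ because $e\in L(b)$. Dually $a\le f\le b$, so $e,f\in[a,b]$. Next I would check $f\le e$: it suffices to show $e\in U\big(a,L(y,b)\big)=U(f)$. Indeed, any $w\in L(y,b)$ satisfies $w\le u$ for every $u\in U(a,y)$ (since $w\le y\le u$) and $w\le b$, so $w\in L\big(U(a,y),b\big)=L(e)$; thus $e$ is an upper bound of $L(y,b)$, and since also $a\le e$ we get $e\in U(f)$, i.e.\ $f\le e$.

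Then I would compute the two cones appearing in the definition of a relative complement. Using $L(e)=L\big(U(a,y),b\big)$ and $x\le b$,
\[
L(x,e)=L(x)\cap L(e)=L\big(U(a,y),x\big)=L(a)
\]
by the first hypothesis; dually, using $U(f)=U\big(a,L(y,b)\big)$ and $a\le x$,
\[
U(x,f)=U(x)\cap U(f)=U\big(x,L(y,b)\big)=U(b)
\]
by the second hypothesis. Finally, $L(x,f)=L(a)$ since $a\le x$ and $a\le f$ give the inclusion $\supseteq$, while $f\le e$ gives $L(x,f)\subseteq L(x,e)=L(a)$; symmetrically $U(x,e)=U(b)$, using $e\le b$, $x\le b$ and $f\le e$. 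As $e,f\in[a,b]$ and both satisfy $U(x,\cdot)=U(b)$ and $L(x,\cdot)=L(a)$, we conclude $e,f\in R(a,b,x)$.

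I do not expect a serious obstacle. The only step needing a moment's thought is the opening remark that a lower (resp.\ upper) cone possessing a greatest (resp.\ smallest) element is principal; after that the argument is a mechanical translation of the lattice proof, the main care being to keep track of which of $a\le x$, $x\le b$ and $f\le e$ is invoked at each step.
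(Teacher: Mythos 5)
Your proof is correct and follows essentially the same route as the paper: the paper does not reprove Proposition~\ref{prop1} directly (it is quoted from \cite{CM}), but its proof of Theorem~\ref{th5} is exactly your argument --- rewriting $L\big(U(a,y),b\big)=L(e)$ and $U\big(a,L(y,b)\big)=U(f)$, deducing $a\leq f\leq e\leq b$, computing $L(x,e)=L\big(U(a,y),x\big)$ and $U(x,f)=U\big(x,L(y,b)\big)$, and transferring the conditions between $e$ and $f$ via $f\leq e$.
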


Let us note that $e$ is the greatest element of $L\big(U(a,y),b\big)$ if and only if $L\big(U(a,y),b\big)=L(e)$, and $f$ is the smallest element of $U\big(a,L(y,b)\big)$ if and only if $U\big(a,L(y,b)\big)=U(f)$.

Now we show that the converse of Proposition~\ref{prop1} is also true, i.e.\ we can state the following theorem which is an extension of Theorem~\ref{th1} to posets.

\begin{theorem}\label{th5}
Let $\mathbf P=(P,\leq)$ be a poset, $a,b\in P$ with $a\leq b$, $x\in[a,b]$ and $y\in P$ and assume that $e$ is the greatest element of $L\big(U(a,y),b\big)$ and $f$ the smallest element of $U\big(a,L(y,b)\big)$. Then $a\leq f\leq e\leq b$ and the following are equivalent:
\begin{enumerate}[{\rm(i)}]
\item $e,f\in R(a,b,x)$,
\item $L\big(U(a,y),x\big)=L(a)$ and $U\big(x,L(y,b)\big)=U(b)$.
\end{enumerate}
\end{theorem}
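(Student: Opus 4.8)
The plan is to follow the proof of Theorem~\ref{th1} almost verbatim, with joins and meets replaced by upper and lower cones. Throughout I use the reformulation recorded after Proposition~\ref{prop1}: the hypotheses on $e$ and $f$ say precisely that $L\big(U(a,y),b\big)=L(e)$ and $U\big(a,L(y,b)\big)=U(f)$. Since $L\big(U(a,y),b\big)=L\big(U(a,y)\big)\cap L(b)$ and $U\big(a,L(y,b)\big)=U(a)\cap U\big(L(y,b)\big)$, this yields the working identities $L(e)=L\big(U(a,y)\big)\cap L(b)$ and $U(f)=U(a)\cap U\big(L(y,b)\big)$.

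First I would establish $a\leq f\leq e\leq b$. As $a\leq b$, the element $b$ lies in $U\big(a,L(y,b)\big)=U(f)$, so $f\leq b$, and $f\geq a$ because $f\in U(a,\dots)$. Since every element of $U(a,y)$ is above $a$, we have $a\in L\big(U(a,y),b\big)=L(e)$, i.e.\ $a\leq e$; and $e\leq b$ because $e\in L(e)\subseteq L(b)$. For $f\leq e$ it is enough to check $e\in U\big(a,L(y,b)\big)$: we already have $a\leq e$, and if $z\in L(y,b)$ then every $u\in U(a,y)$ satisfies $u\geq y\geq z$, whence $z\in L\big(U(a,y),b\big)=L(e)$, i.e.\ $z\leq e$; thus $e\in U(f)$ and $f\leq e$. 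In particular $e,f\in[a,b]$.

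Next I would prove the two cone identities that replace the lattice computations $e\wedge x=(a\vee y)\wedge x$ and $x\vee f=x\vee(y\wedge b)$. Using $x\leq b$ (so $L(x)\subseteq L(b)$) one gets $L(x,e)=L(x)\cap L(e)=L(x)\cap L\big(U(a,y)\big)\cap L(b)=L(x)\cap L\big(U(a,y)\big)=L\big(U(a,y),x\big)$. Dually, using $a\leq x$ (so $U(x)\subseteq U(a)$) one gets $U(x,f)=U(x)\cap U(f)=U(x)\cap U(a)\cap U\big(L(y,b)\big)=U(x)\cap U\big(L(y,b)\big)=U\big(x,L(y,b)\big)$.

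Finally I would assemble the equivalence. Since membership in $R(a,b,x)$ requires the element to lie in $[a,b]$, which has already been verified for $e$ and $f$, condition~(i) says exactly that $L(x,e)=L(x,f)=L(a)$ and $U(x,e)=U(x,f)=U(b)$. From $a\leq x$ and $a\leq f\leq e$ I obtain $L(a)\subseteq L(x,f)\subseteq L(x,e)$, and from $x\leq b$ and $f\leq e\leq b$ I obtain $U(b)\subseteq U(x,e)\subseteq U(x,f)$; hence $L(x,e)=L(a)$ already forces $L(x,f)=L(a)$, and $U(x,f)=U(b)$ already forces $U(x,e)=U(b)$. Consequently (i) is equivalent to the single pair of conditions $L(x,e)=L(a)$ and $U(x,f)=U(b)$, which by the identities of the previous paragraph is precisely (ii). The only mildly delicate step is the verification of $f\leq e$, i.e.\ pinning down the correct cone membership; the rest is bookkeeping with upper and lower cones, and, as in the lattice case, neither modularity nor boundedness is used.
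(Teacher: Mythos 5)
Your proof is correct and follows essentially the same route as the paper's: the same cone identities $L(x,e)=L\big(U(a,y),x\big)$ and $U(x,f)=U\big(x,L(y,b)\big)$, the same monotonicity argument reducing (i) to the single pair $L(x,e)=L(a)$, $U(x,f)=U(b)$, and the same key observation for $f\leq e$ (you verify $e\in U(f)$ where the paper shows $L(f)\subseteq L(e)$ via $U(a,y)\cup\{b\}\subseteq U(f)$, which is the same idea viewed from the other side). No gaps.
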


\begin{proof}
We have $L\big(U(a,y),b\big)=L(e)$ and $U\big(a,L(y,b)\big)=U(f)$. Because of $U(a,y)\cup\{b\}\subseteq U\big(a,L(y,b)\big)=U(f)$ we conclude
\[
f\in L(f)=LU(f)\subseteq L\big(U(a,y),b\big)=L(e)
\]
and hence $a\leq f\leq e\leq b$. Moreover, we have
\begin{align*}
L(e,x) & =L(e)\cap L(x)=L\big(U(a,y),b\big)\cap L(x)=LU(a,y)\cap L(b)\cap L(x)= \\
       & =LU(a,y)\cap L(x)=L\big(U(a,y),x\big), \\
U(x,f) & =U(x)\cap U(f)=U(x)\cap U\big(a,L(y,b)\big)=U(x)\cap U(a)\cap UL(y,b)= \\
       & =U(x)\cap UL(y,b)=U\big(x,L(y,b)\big).
\end{align*}
Since $a\leq x$ and $a\leq f\leq e$, we have that $L(x,e)=L(a)$ implies $L(c,f)=L(a)$, and since $x\leq b$ and $f\leq e\leq b$, we have that $U(x,f)=U(b)$ implies $U(x,e)=U(b)$. Hence (i) and (ii) are both equivalent to $L(e,x)=L(a)$ and $U(x,f)=U(b)$.
\end{proof}

\begin{example}
Consider the poset $\mathbf P$ depicted in Fig.~6:

\vspace*{-2mm}

\begin{center}
\setlength{\unitlength}{7mm}
\begin{picture}(12,12)
\put(6,1){\circle*{.3}}
\put(4,3){\circle*{.3}}
\put(6,3){\circle*{.3}}
\put(8,3){\circle*{.3}}
\put(6,5){\circle*{.3}}
\put(1,6){\circle*{.3}}
\put(11,6){\circle*{.3}}
\put(4,9){\circle*{.3}}
\put(6,9){\circle*{.3}}
\put(8,9){\circle*{.3}}
\put(6,11){\circle*{.3}}
\put(6,7){\circle*{.3}}
\put(6,1){\line(-1,1)5}
\put(6,1){\line(0,1){10}}
\put(6,1){\line(1,1)5}
\put(4,3){\line(1,1)2}
\put(8,3){\line(-1,1)2}
\put(6,5){\line(-1,-1)2}
\put(6,5){\line(1,-1)2}
\put(11,6){\line(-5,-3)5}
\put(11,6){\line(-5,3)5}
\put(6,7){\line(-1,1)2}
\put(6,7){\line(1,1)2}
\put(6,11){\line(-1,-1)5}
\put(6,11){\line(1,-1)5}
\put(5.85,.25){$0$}
\put(3.3,2.85){$a$}
\put(5.3,2.85){$c$}
\put(8.4,2.85){$g$}
\put(5.3,4.85){$f$}
\put(.3,5.85){$x$}
\put(5.3,6.85){$e$}
\put(3.3,8.85){$b$}
\put(5.3,8.85){$d$}
\put(11.4,5.85){$y$}
\put(8.4,8.85){$h$}
\put(5.85,11.4){$1$}
\put(5.2,-.75){{\rm Fig.~6}}
\end{picture}
\end{center}

\vspace*{4mm}

Clearly, $\mathbf P$ is not modular. Since
\begin{align*}
L\big(U(a,y),b\big) & =L(d,h,b)=L(e), \\
U\big(a,L(y,b)\big) & =U(a,c,g)=U(f), \\
L\big(U(a,y),x\big) & =L(d,h,x)=L(a), \\
U\big(x,L(y,b)\big) & =U(x,c,g)=U(b),
\end{align*}
the assumptions of Theorem~\ref{th5} as well as {\rm(ii)} of this theorem are satisfied and hence $e$ and $f$ are relative complements of $x$ in $[a,b]$.
\end{example}

Similarly as for lattices, if $y$ is a complement of $x$ then the existence of relative complements of $x$ in $[a,b]$ is assured by certain modular triples as follows.

\begin{theorem}\label{th8}
Let $\mathbf P=(P,\leq)$ be a poset, $a,b\in P$ with $a\leq b$, $x\in[a,b]$ and $y$ a complement of $x$ and assume that $e$ is the greatest element of $L\big(U(a,y),b\big)$ and $f$ the smallest element of $U\big(a,L(y,b)\big)$. Then $a\leq f\leq e\leq b$ and the following are equivalent:
\begin{enumerate}[{\rm(i)}]
\item $e,f\in R(a,b,x)$,
\item $(a,y,x)$ and $(x,y,b)$ are modular triples.
\end{enumerate}
\end{theorem}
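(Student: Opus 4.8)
The plan is to follow the same route as the proof of Theorem~\ref{th4}, with Theorem~\ref{th5} playing the role of Theorem~\ref{th1}. The inequalities $a\le f\le e\le b$ are already part of the statement of Theorem~\ref{th5}, so I will simply invoke it. By Theorem~\ref{th5}, (i) is equivalent to the pair of conditions $L\big(U(a,y),x\big)=L(a)$ and $U\big(x,L(y,b)\big)=U(b)$, and the whole task is to show that this pair is equivalent to (ii).

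I will treat the two conditions separately. For the first I will use that $y$ is a complement of $x$, which in a poset means $LU(x,y)=UL(x,y)=P$. From $UL(x,y)=P$ one gets $U\big(a,L(y,x)\big)=U(a)\cap UL(x,y)=U(a)$, hence $LU\big(a,L(y,x)\big)=LU(a)=L(a)$. Since $a\le x$, the triple $(a,y,x)$ is modular precisely when $L\big(U(a,y),x\big)=LU\big(a,L(y,x)\big)$, i.e.\ (by the previous computation) precisely when $L\big(U(a,y),x\big)=L(a)$; so the first condition is exactly modularity of $(a,y,x)$. For the second condition I will dually use $LU(x,y)=P$, which gives $L\big(U(x,y),b\big)=LU(x,y)\cap L(b)=L(b)$; then I must identify $U\big(x,L(y,b)\big)=U(b)$ with modularity of $(x,y,b)$, namely $L\big(U(x,y),b\big)=LU\big(x,L(y,b)\big)$, which in view of $L\big(U(x,y),b\big)=L(b)$ amounts to $LU\big(x,L(y,b)\big)=L(b)$.

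The one spot needing a little care — the main obstacle, such as it is — is that Theorem~\ref{th5} delivers the second condition as an equality of upper cones, $U\big(x,L(y,b)\big)=U(b)$, whereas the definition of a modular triple is phrased with lower cones. These are reconciled purely formally: applying $L$ to $U\big(x,L(y,b)\big)=U(b)$ and using $LU(b)=L(b)$ gives $LU\big(x,L(y,b)\big)=L(b)$, and conversely applying $U$ and using the closure identities $ULU=U$ and $UL(b)=U(b)$ recovers $U\big(x,L(y,b)\big)=U(b)$, so the two equalities are equivalent. Assembling the two halves shows that the pair of conditions from Theorem~\ref{th5} is equivalent to (ii), completing the proof. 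The remainder is a routine transcription of the lattice computation, the lattice facts $x\wedge y=0$ and $x\vee y=1$ being replaced throughout by the poset identities $UL(x,y)=P$ and $LU(x,y)=P$.
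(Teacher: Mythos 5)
Your proposal is correct and follows essentially the same route as the paper: invoke Theorem~\ref{th5} to reduce (i) to the pair of cone conditions, then use the complement identities $LU(x,y)=UL(x,y)=P$ to rewrite $L(a)$ as $LU\big(a,L(y,x)\big)$ and $L(b)$ as $L\big(U(x,y),b\big)$, so that the pair becomes exactly the two modular-triple equations. Your explicit justification of the passage between $U\big(x,L(y,b)\big)=U(b)$ and $LU\big(x,L(y,b)\big)=L(b)$ via $ULU=U$ is the step the paper dismisses as ``clearly'' equivalent, and spelling it out is a small improvement rather than a deviation.
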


\begin{proof}
Since $U(a,y)\cup\{b\}\subseteq U\big(a,L(y,b)\big)=U(f)$ we have
\[
f\in L(f)=LU(f)\subseteq L\big(U(a,y),b\big)=L(e)
\]
and hence $a\leq f\leq e\leq b$. Because of Theorem~\ref{th5}, (i) is equivalent to
\begin{equation}\label{equ5}
L\big(U(a,y),x\big)=L(a)\text{ and }U(b)=U\big(x,L(y,b)\big).
\end{equation}
Clearly, (\ref{equ5}) is equivalent to
\begin{equation}\label{equ6}
L\big(U(a,y),x\big)=L(a)\text{ and }L(b)=LU\big(x,L(y,b)\big).
\end{equation}
Since
\begin{align*}
L(a) & =LU(a)=L\big(U(a)\cap P\big)=L\big(U(a)\cap UL(y,x)\big)=LU\big(a,L(y,x)\big), \\
L(b) & =P\cap L(b)=LU(x,y)\cap L(b)=L\big(U(x,y),b\big), \\
\end{align*}
(\ref{equ6}) is equivalent to
\begin{equation}\label{equ7}
L\big(U(a,y),x\big)=LU\big(a,L(y,x)\big)\text{ and }L\big(U(x,y),b\big)=LU\big(x,L(y,b)\big).
\end{equation}
Finally, due to the definition of modular triples, (\ref{equ7}) is equivalent to (ii).
\end{proof}

If $y$ is not assumed to be a complement of $x$, but the triple $(a,y,b)$ is modular then we can determine a relative complement of $x$ in $[a,b]$ and, as in Theorem~\ref{th3} for lattices, again $e=f$.

\begin{theorem}\label{th7}
Let $\mathbf P=(P,\leq)$ be a poset, $a,b\in L$ with $a\leq b$, $x\in[a,b]$ and $y\in P$ and assume $(a,y,b)$ to be a modular triple and $e$ to be the greatest element of $L\big(U(a,y),b\big)=LU\big(a,L(y,b)\big)$. Then the following are equivalent:
\begin{enumerate}[{\rm(i)}]
\item $e\in R(a,b,x)$,
\item $L\Big(U\big(a,L(y,b)\big),x\Big)=L(a)$ and $U\Big(x,L\big(U(a,y),b\big)\Big)=U(b)$.
\end{enumerate}
\end{theorem}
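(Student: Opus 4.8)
The plan is to argue exactly as in the proof of Theorem~\ref{th3}, the lattice predecessor of this statement, reducing everything to Theorem~\ref{th5} by routine manipulations of lower and upper cones; the modularity of $(a,y,b)$ enters only through the identity $L\big(U(a,y),b\big)=LU\big(a,L(y,b)\big)$ that is already built into the hypothesis.

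First I would record the elementary facts about $e$. The set $L\big(U(a,y),b\big)=LU(a,y)\cap L(b)$ is a downset, being an intersection of two lower cones, and since it has $e$ as greatest element it therefore equals $L(e)$; by modularity of $(a,y,b)$ it also equals $LU\big(a,L(y,b)\big)$. From $a\leq b$ and $a\leq w$ for every $w\in U(a,y)$ we get $a\in L\big(U(a,y),b\big)=L(e)$, hence $a\leq e$, while $e\in L\big(U(a,y),b\big)\subseteq L(b)$ gives $e\leq b$. Thus $e\in[a,b]$, so (i) is equivalent to the pair of equalities $L(e,x)=L(a)$ and $U(x,e)=U(b)$.

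The core step is the computation converting (ii) into precisely this pair. On the one hand $L\big(U(a,L(y,b)),x\big)=LU\big(a,L(y,b)\big)\cap L(x)=L\big(U(a,y),b\big)\cap L(x)=L(e)\cap L(x)=L(e,x)$, the second equality being modularity. On the other hand $U\big(x,L(U(a,y),b)\big)=U(x)\cap UL\big(U(a,y),b\big)=U(x)\cap UL(e)=U(x)\cap U(e)=U(x,e)$, using $L\big(U(a,y),b\big)=L(e)$ together with the standard identity $UL(e)=U(e)$. Hence (ii) asserts exactly $L(e,x)=L(a)$ and $U(x,e)=U(b)$, which in view of $e\in[a,b]$ is (i).

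I do not expect a genuine obstacle: the proof is pure cone bookkeeping, the only points needing attention being the identity $L\big(U(a,y),b\big)=L(e)$ (which rests on that set being a downset with top element $e$), the identity $UL(e)=U(e)$, and consistent use of the conventions $L(A,x)=L(A)\cap L(x)$ and $U(A,x)=U(A)\cap U(x)$. If one wishes to mirror Theorem~\ref{th3} even more closely, one can first observe that $e$ coincides with the smallest element $f$ of $U\big(a,L(y,b)\big)$, since $U\big(a,L(y,b)\big)=ULU\big(a,L(y,b)\big)=UL\big(U(a,y),b\big)=UL(e)=U(e)$, and then note that the two conditions in (ii) are termwise equivalent to the two conditions in Theorem~\ref{th5}(ii), so that the equivalence drops out of Theorem~\ref{th5} directly.
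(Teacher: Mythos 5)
Your proof is correct and follows essentially the same route as the paper: both reduce (i) and (ii) to the common pair of equalities $L(e,x)=L(a)$ and $U(x,e)=U(b)$ via the cone identities $L(e)=L\big(U(a,y),b\big)=LU\big(a,L(y,b)\big)$ and $U(e)=UL(e)=UL\big(U(a,y),b\big)$. Your extra verification that $a\leq e\leq b$ and the closing remark that $e$ equals the $f$ of Theorem~\ref{th5} are harmless additions of detail, not a different argument.
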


\begin{proof}
Since
\begin{align*}
L(e) & =L\big(U(a,y),b\big)=LU\big(a,L(y,b)\big), \\
U(e) & =UL\big(U(a,y),b\big)
\end{align*}
and
\begin{align*}
L(e,x) & =L(e)\cap L(x)=LU\big(a,L(y,b)\big)\cap L(x)=L\Big(U\big(a,L(y,b)\big),x\Big), \\
U(x,e) & =U(x)\cap U(e)=U(x)\cap UL\big(U(a,y),b\big)=U\Big(x,L\big(U(a,y),b\big)\Big),
\end{align*}
(i) and (ii) are both equivalent to $L(e,x)=L(a)$ and $U(x,e)=U(b)$.
\end{proof}

If we add the assumption that $y$ is a complement of $x$, then we obtain the following result.

\begin{proposition}
{\rm(\cite{CM})} Let $\mathbf P=(P,\leq)$ be a modular poset, $a,b\in P$ with $a\leq b$, $x\in[a,b]$ and $y$ a complement of $x$ and assume $e$ to be the greatest element of $L\big(U(a,y),b\big)=LU\big(a,L(y,b)\big)$. Then $e\in R(a,b,x)$.
\end{proposition}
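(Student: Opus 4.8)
The plan is to deduce this from Theorem~\ref{th8}. Since $\mathbf P$ is modular and $a\leq b$, $a\leq x$ and $x\leq b$, the triples $(a,y,b)$, $(a,y,x)$ and $(x,y,b)$ are all modular triples of $\mathbf P$. In particular $(a,y,b)$ being modular means $L\big(U(a,y),b\big)=LU\big(a,L(y,b)\big)$, so the hypothesis on $e$ is consistent, and $L(e)=L\big(U(a,y),b\big)=LU\big(a,L(y,b)\big)$.

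The first step is to check that the element called $f$ in Theorem~\ref{th8} exists and in fact equals $e$. Applying the operator $U$ to the identity $LU\big(a,L(y,b)\big)=L(e)$ and using the standard closure identity $ULU(A)=U(A)$, valid for every $A\subseteq P$, we obtain
\[
U\big(a,L(y,b)\big)=ULU\big(a,L(y,b)\big)=UL(e)=U(e).
\]
Hence $e$ is the smallest element of $U\big(a,L(y,b)\big)$, so in the notation of Theorem~\ref{th8} we may take $f:=e$.

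Now Theorem~\ref{th8} applies with these $e$ and $f=e$: its hypotheses (that $e$ is the greatest element of $L\big(U(a,y),b\big)$ and $f$ the smallest element of $U\big(a,L(y,b)\big)$) are satisfied, and condition~(ii) of that theorem -- that $(a,y,x)$ and $(x,y,b)$ be modular triples -- holds because $\mathbf P$ is modular. Therefore condition~(i) of Theorem~\ref{th8} holds, i.e.\ $e=f\in R(a,b,x)$, which is exactly the assertion.

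The only genuine point to verify is the coincidence $f=e$, equivalently $U\big(a,L(y,b)\big)=U(e)$; everything else is an immediate specialization of Theorem~\ref{th8} using the modularity of the two auxiliary triples. One could try instead to invoke Theorem~\ref{th7} directly, but there condition~(ii) unwinds (using $L(U(a,y),b)=L(e)$ and $U(a,L(y,b))=U(e)$) to exactly $L(e,x)=L(a)$ and $U(x,e)=U(b)$, which is the very conclusion we want; that route is therefore circular, and Theorem~\ref{th8} is the appropriate tool since it is where the modularity of $(a,y,x)$ and $(x,y,b)$ does the work.
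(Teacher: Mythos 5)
Your proposal is correct. Note, however, that the paper offers no proof of this proposition at all --- it is quoted from the reference [CM] --- so there is nothing internal to compare against; what you have done is show that the result also follows from the paper's own Theorem~\ref{th8}, which is a legitimate and natural route. The one step that genuinely needs checking is the existence of the element $f$ required by Theorem~\ref{th8} and its coincidence with $e$, and you handle it correctly: from $LU\big(a,L(y,b)\big)=L(e)$ the closure identity $ULU(A)=U(A)$ gives $U\big(a,L(y,b)\big)=UL(e)=U(e)$, so $e$ is the smallest element of $U\big(a,L(y,b)\big)$. The rest is an immediate specialization, since in a modular poset every triple $(u,v,w)$ with $u\leq w$ is a modular triple, so condition (ii) of Theorem~\ref{th8} holds for $(a,y,x)$ and $(x,y,b)$. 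Your side remark that invoking Theorem~\ref{th7} directly would be circular is also accurate: after substituting $L\big(U(a,y),b\big)=L(e)$ and $U\big(a,L(y,b)\big)=U(e)$, its condition (ii) reduces to $L(e,x)=L(a)$ and $U(x,e)=U(b)$, which is the desired conclusion itself (one would still have to verify these two identities from modularity, essentially redoing the work that Theorem~\ref{th8} packages).
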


The following theorem generalizes this proposition and Theorem~\ref{th3}.

\begin{theorem}
Let $\mathbf P=(P,\leq)$ be a poset, $a,b\in L$ with $a\leq b$, $x\in[a,b]$ and $y$ a complement of $x$ and assume $(a,y,b)$ to be a modular triple and $e$ to be the greatest element of $L\big(U(a,y),b\big)=LU\big(a,L(y,b)\big)$. Then the following are equivalent:
\begin{enumerate}[{\rm(i)}]
\item $e\in R(a,b,x)$,
\item $L\Big(U\big(a,L(y,b)\big),x\Big)=LU\big(a,L(a,b,x)\big)$ and $L\big(U(x,a,y),b\big)=LU\Big(x,L\big(U(a,y),b\big)\Big)$.
\end{enumerate}
\end{theorem}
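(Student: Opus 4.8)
The plan is to reduce condition (ii) to the equivalence already established in Theorem~\ref{th7}. First I would record the basic cone estimates: every element of $L\big(U(a,y),b\big)$ lies below $b$, so $e\le b$, and since $a$ is a lower bound of $U\big(a,L(y,b)\big)$ we get $a\in LU\big(a,L(y,b)\big)=L(e)$, hence $a\le e\le b$ and $e\in[a,b]$. Consequently $e\in R(a,b,x)$ is equivalent to $L(e,x)=L(a)$ and $U(x,e)=U(b)$. Using $L(e)=L\big(U(a,y),b\big)=LU\big(a,L(y,b)\big)$ and $U(e)=UL\big(U(a,y),b\big)$, the same cone computations as in the proof of Theorem~\ref{th7} give $L(e,x)=L\big(U(a,L(y,b)),x\big)$ and $U(x,e)=U\big(x,L(U(a,y),b)\big)$; thus (i) is equivalent to the conjunction of $L\big(U(a,L(y,b)),x\big)=L(a)$ and $U\big(x,L(U(a,y),b)\big)=U(b)$, i.e.\ to condition (ii) of Theorem~\ref{th7}.

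Next I would rewrite the two clauses of (ii) so that they coincide with these two statements. For the first clause, the point is that $a\le x\le b$ forces $L(a,b,x)=L(a)$, and then $LU\big(a,L(a,b,x)\big)=LU\big(a,L(a)\big)=LU(a)=L(a)$ (using $U\big(\{a\}\cup L(a)\big)=U(a)$). So the first clause of (ii) is literally the statement $L\big(U(a,L(y,b)),x\big)=L(a)$. For the second clause I would invoke that $y$ is a complement of $x$: since $a\le x$ we have $U(x,a,y)=U(x,y)$, and complementation gives $LU(x,y)=P$, so $L\big(U(x,a,y),b\big)=LU(x,y)\cap L(b)=L(b)$; on the other hand $LU\big(x,L(U(a,y),b)\big)=LU\big(x,L(e)\big)=L\big(U(x)\cap U(e)\big)=L\big(U(x,e)\big)$. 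Hence the second clause of (ii) is the statement $L(b)=L\big(U(x,e)\big)$.

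It then remains to verify that $L(b)=L\big(U(x,e)\big)$ is equivalent to $U(x,e)=U(b)$. One direction follows by applying $L$ to $U(x,e)=U(b)$; for the converse, $x,e\le b$ give $U(b)\subseteq U(x,e)$, and $b\in L(b)=L\big(U(x,e)\big)$ makes $b$ a lower bound of $U(x,e)$, which yields $U(x,e)\subseteq U(b)$. Since $U\big(x,L(U(a,y),b)\big)=U(x,e)$, the second clause of (ii) is thus equivalent to $U\big(x,L(U(a,y),b)\big)=U(b)$. Collecting everything, (ii) is equivalent to condition (ii) of Theorem~\ref{th7}, which by that theorem is equivalent to (i).

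All the manipulations are routine calculus of lower and upper cones; the only substantive ingredients are the unconditional identity $LU\big(a,L(a,b,x)\big)=L(a)$ (which absorbs the first clause, using only $a\le x\le b$) and the equality $LU(x,y)=P$ coming from the hypothesis that $y$ is a complement of $x$ (the sole place where that hypothesis enters). The main --- and quite modest --- obstacle is therefore bookkeeping: keeping track of which nested cone expressions collapse to $L(a)$, $L(b)$, $U(a)$ or $U(b)$. Once the two clauses of (ii) have been rewritten, the statement is an immediate corollary of Theorem~\ref{th7}.
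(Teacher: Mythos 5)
Your proof is correct and follows essentially the same route as the paper's: both reduce (i) to condition (ii) of Theorem~\ref{th7} and then rewrite the two clauses by cone identities, the complement hypothesis entering through $LU(x,y)=P$ in the second clause and the passage from $U(b)=U(x,e)$ to $L(b)=LU(x,e)$ being justified exactly as you do. The one divergence concerns the first clause: the paper's computation establishes $L(a)=LU\big(a,L(y,x)\big)=LU\big(a,L(y,b,x)\big)$ (again using the complement hypothesis), which indicates that the printed $L(a,b,x)$ in (ii) is a typo for $L(y,b,x)$; you instead prove the literal printed version by noting that $LU\big(a,L(a,b,x)\big)=L(a)$ holds unconditionally from $a\leq x\leq b$, which is also valid but renders that clause degenerate rather than a modular-triple-type identity as the authors evidently intended.
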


\begin{proof}
Because of Theorem~\ref{th7}, (i) is equivalent to
\begin{equation}\label{equ8}
L\Big(U\big(a,L(y,b)\big),x\Big)=L(a)\text{ and }U(b)=U\Big(x,L\big(U(a,y),b\big)\Big).
\end{equation}
Clearly, (\ref{equ8}) is equivalent to
\begin{equation}\label{equ9}
L\Big(U\big(a,L(y,b)\big),x\Big)=L(a)\text{ and }L(b)=LU\Big(x,L\big(U(a,y),b\big)\Big).
\end{equation}
Finally, since
\begin{align*}
L(a) & =LU(a)=L\big(U(a)\cap P\big)=L\big(U(a)\cap UL(y,x)\big)=LU\big(a,L(y,x)\big)= \\
     & =LU\big(a,L(y,b,x)\big), \\
L(b) & =P\cap L(b)=LU(x,y)\cap L(b)=L\big(U(x,y),b\big)=L\big(U(x,a,y),b\big),
\end{align*}
(\ref{equ9}) is equivalent to (ii).
\end{proof}

Authors' addresses:

Ivan Chajda \\
Palack\'y University Olomouc \\
Faculty of Science \\
Department of Algebra and Geometry \\
17.\ listopadu 12 \\
771 46 Olomouc \\
Czech Republic \\
ivan.chajda@upol.cz

Helmut L\"anger \\
TU Wien \\
Faculty of Mathematics and Geoinformation \\
Institute of Discrete Mathematics and Geometry \\
Wiedner Hauptstra\ss e 8-10 \\
1040 Vienna \\
Austria, and \\
Palack\'y University Olomouc \\
Faculty of Science \\
Department of Algebra and Geometry \\
17.\ listopadu 12 \\
771 46 Olomouc \\
Czech Republic \\
helmut.laenger@tuwien.ac.at
\end{document}